\documentclass[final,a4paper]{amsart}

\usepackage[english]{babel}
\usepackage[utf8]{inputenc}

% Use T1-Encoding to allow proper hyphenation of words with umlauts
\usepackage[T1]{fontenc}
\usepackage{lmodern}

% other Packages
\usepackage{enumitem}
\usepackage[notref,notcite,color]{showkeys}
\usepackage{verbatim}
\usepackage{url}

\def\clap#1{\hbox to 0pt{\hss#1\hss}}

\def\mathrlap{\mathpalette\mathrlapinternal}

\def\mathrlapinternal#1#2{%
\rlap{$\mathsurround=0pt#1{#2}$}}

\usepackage{amsmath,amssymb,ifthen,xspace}

\newcommand{\lpfont}{\mathrm}

\newcommand*{\lp}[2][]{\ensuremath{
    \lpfont{
      \ifthenelse{\equal{#1}{}}{#2}{#1 \text{-} #2}}}\xspace}
\newcommand*{\lpp}[3][]{\ensuremath{
    \lpfont{
      \ifthenelse{\equal{#1}{}}{#2}{#1 \text{-} #2}}(#3)}\xspace}
\newcommand*{\lph}[3][]{\ensuremath{
    \lpfont{
      \ifthenelse{\equal{#1}{}}{{#2}^{#3}}{#1 \text{-} {#2}^{#3}}}}\xspace
}

\newcommand*{\lpf}[1]{(\lp{#1})}
\newcommand*{\Weak}{{\upharpoonright}}

\newcommand*{\lf}[1]{\lpfont{#1}}
%%%

%%%
% Logical System
\newcommand*{\ls}[2][]{\ensuremath{
  \lpfont{
      \ifthenelse{\equal{#1}{}}{#2}{#1 \text{-} #2}}}\xspace}

% concret logical systems

%\newcommand*{\PAr}{\ls{\widehat{PA^\omega}\!\!\upharpoonright}}

\newcommand*{\WIDEomega}[1]{\vphantom{\lpfont{\widehat{#1}}}\smash[t]{\lpfont{\widehat{#1}}}^\omega}

\newcommand*{\EPAw}{\ls{\WIDEomega{E\text{-}PA}\Weak}}
\newcommand*{\WEPAw}{\ls{\WIDEomega{WE\text{-}PA}\Weak}}

%\newcommand*{\EPA}{\ls{\widehat{E\text{-}PA}^\omega}}
%\newcommand*{\WEPA}{\ls{\widehat{WE\text{-}PA}^\omega}}
%%%

%%%
% logical connective with better spacing
\newcommand{\IMPL}[1][]{
  \ifthenelse{\equal{#1}{}}{\mathop{\rightarrow}}{\mathop{\stackrel{#1}{\longrightarrow}}}}

\newcommand*{\AND}{\mathrel{\land}}
\newcommand*{\OR}{\mathrel{\lor}}

\newcommand*{\IFF}[1][]{
  \ifthenelse{\equal{#1}{}}{\mathrel{\leftrightarrow}}{\mathrel{\stackrel{#1}{\longleftrightarrow}}}}

\newcommand*{\Quantor}[2]{{#1 #2}\,}
\newcommand*{\Forall}[1]{\Quantor{\forall}{#1}}
\newcommand*{\Exists}[1]{\Quantor{\exists}{#1}}
\newcommand*{\ExistsUn}[1]{\Quantor{\exists!}{#1}}

%%%

%%%
% N for natural numbers
\newcommand*{\Nat}{\ensuremath{\mathbb{N}}}

%%%

%%%
% maj and smaj relation}

%%%

%%%
% tupel command
\newcommand*{\tup}[2][]{
  \ifthenelse{\equal{#1}{}}{\ushort{#2}}{\ushort{#2}^{\ushort{#1}}}}
%%%

%%%
% modified minus

%%%

%%%
% misc commands
%
% denotes a set 

% restrict

% cardinality

% absolute value

\DeclareMathOperator{\lth}{lth}

\newcommand{\qf}{\textit{\!qf}}

\newcommand{\sizeMid}{\;\middle\vert\;}

%\usepackage{xr}
%\externaldocument[swkl-]{swkl}

% theorems
\newcommand*{\Theorem}{Theorem}
\newcommand*{\Proposition}{Proposition}
\newcommand*{\Lemma}{Lemma}
\newcommand*{\Corollary}{Corollary}
\newcommand*{\Definition}{Definition}
\newcommand*{\Remark}{Remark}
\newcommand*{\Notation}{Notation}

\theoremstyle{plain}
\newtheorem{theorem}{\Theorem}
\newtheorem{proposition}[theorem]{\Proposition}

\newtheorem{lemma}[theorem]{\Lemma}
\theoremstyle{definition}
\newtheorem{definition}[theorem]{\Definition}
\theoremstyle{remark}
\newtheorem{remark}[theorem]{\Remark}

\newcommand{\U}{\mathcal{U}}
\newcommand{\F}{\mathcal{F}}
\newcommand{\ps}[1]{\mathcal{P}(#1)}

\newcommand{\T}{\mathbf{T}}

\title[Ultrafilters, program extraction and reverse mathematics]{Non-principal ultrafilters, program extraction and higher order reverse mathematics}
\subjclass[2010]{03B15, 03B30, 03F35, 03F60}
\keywords{ultrafilter, conservation, program extraction, functional interpretation}

\author{Alexander P.\ Kreuzer}
\address{Fachbereich Mathematik, Technische Universit\"{a}t Darmstadt\\
Schlossgartenstra{\ss}e~7, 64289 Darmstadt, Germany
}
\email{akreuzer@mathematik.tu-darmstadt.de}
\urladdr{http://www.mathematik.tu-darmstadt.de/~akreuzer}
\thanks{The author is supported by the German Science Foundation (DFG Project KO 1737/5-1).}
\thanks{I am grateful to Ulrich Kohlenbach for useful
discussions and suggestions for improving the presentation
of the material in this article.}

\date{\today}

\begin{document}
\begin{abstract}
  We investigate the strength of the existence of a non-principal ultrafilter over fragments of higher order arithmetic. 

  Let \lpf{\U} be the statement that a non-principal ultrafilter exists and let \ls{ACA_0^\omega} be the higher order extension of \ls{ACA_0}.
  We show that $\ls{ACA_0^\omega}+ \lpf{\U}$ is $\Pi^1_2$\nobreakdash-\hspace{0pt}conservative over \ls{ACA_0^\omega} and thus that $\ls{ACA_0^\omega} +\lpf{\U}$ is conservative over \ls{PA}.

  Moreover, we provide a program extraction method and show that from a proof of a strictly $\Pi^1_2$ statement $\Forall{f}\Exists{g} \lf{A_\qf}(f,g)$ in $\ls{ACA_0^\omega}+\lpf{\U}$ a realizing term in G\"odel's system $T$ can be extracted. This means that one can extract a term $t\in T$, such that $\Forall{f} A_\qf(f,t(f))$.
\end{abstract}

\maketitle

In this paper we will investigate  the strength of the existence of a non-principal ultrafilter over fragments of higher order arithmetic.
We will classify the consequences of this statement in the spirit of reverse mathematics. Furthermore, we will provide a program extraction method.

Let \lpf{\U} be the statement that a non-principal ultrafilter on $\Nat$ exists. Let \ls{RCA_0^\omega}, \ls{ACA_0^\omega} be the extensions of \ls{RCA_0} resp.\ \ls{ACA_0} to higher order arithmetic as introduced by Kohlenbach in \cite{uK05b}. In \ls{RCA_0^\omega} or \ls{ACA_0^\omega} the statement \lpf{\U} can be formalized using an object of type $\Nat^\Nat \longrightarrow \Nat$.

Further, let Feferman's $\mu$ be a functional of type $\Nat^\Nat\longrightarrow \Nat$ satisfying 
\[
f(\mu(f))= 0\quad\text{if}\quad \Exists{x} f(x)=0 
\]
and let \lpf{\mu} be the statement that such a functional exists. It is clear that \lpf{\mu} implies arithmetical comprehension.

 We will show that
\begin{itemize}
\item over \ls{RCA_0^\omega} the statement \lpf{\U} implies \lpf{\mu} and therefore also \ls{ACA_0^\omega}, and that
\item $\ls{ACA_0^\omega}+\lpf{\mu}+\lpf{\U}$ is $\Pi^1_2$-conservative over \ls{ACA_0^\omega} and therefore also conservative over \ls{PA}. Moreover, we will show that from a proof of $\Forall{f}\Exists{g} \lf{A_\qf}(f,g)$ in $\ls{ACA_0^\omega} + \lpf{\mu}+ \lpf{\U}$, where $\lf{A_\qf}$ is quantifier free, one can extract a realizing term $t$ in G\"odel's system~$T$, i.e.\ a term such that $\Forall{f}\lf{A_\qf}(f,t(f))$. 
\end{itemize}

The system $\ls{ACA_0^\omega}+\lpf{\mu}+\lpf{\U}$ is strong, one can carry out nearly all ultralimit and non-standard arguments.
For instance one can carried out in this theory the construction of Banach limits and many Loeb measure constructions.
Our results show that this system is weak with respect to $\Pi^1_2$ sentences.
Moreover, our program extraction result show that one can still obtain constructive (even primitive recursive in the sense of G\"odel) realizers and bounds from proofs using highly non-constructive objects like non-principal ultrafilter.

Using this technique it is possible to extract bounds from proofs using ultralimits and non-standard technique. Such proofs do occur in mathematics, for instance in metric fixed point theory, see \cite{AK90} and \cite{KS01}. In \cite{pG06b} Gerhardy extracted a rate of proximity of such a proof by eliminating the ultrafilter by hand. Our result here show that this can be done with any such argument.

%\marginpar{not limited to this, works also Tao or gromovs theorem}

\subsubsection*{Comparison to other approaches}
Solovay first used partial ultrafilter. He constructed a filter which acts on the hyperarithemtical sets like a non-principal ultrafilter. With this he show an effective version of the Galvin-Prikry theorem, see \cite{rS78}. 
His construction of the partial ultrafilter is similar to ours. Avigad analyzed his result in terms of reverse mathematics and formalized this particular proof in \ls{ATR_0}, see \cite{jA98}.
However, this result does not follow from our meta-theorem, since it not only uses a non-principal ultrafilter but also substantial amounts of transfinite recursion. 
%(In fact this theorem is equivalent to \lp{ATR_0} therefore necessarily requires this, see \cite{FMS82}.)

Using our approach one also obtains upper bounds on the strength of non-standard analysis and program extraction methods. This can be done by constructing a ultrapower model of non-standard analysis in $\ls{ACA_0^\omega}+\lpf{\mu}+ \lpf{\U}$. If one is not interested in the ultrafilter but only in the axiomatic treatment of non-standard analysis one can obtain refined results by interpreting it directly, see for instance \cite{jA05}, \cite{jK06} and for program extraction \cite{BBS}.

Palmgren used in \cite{eP99} an approach similar to ours to interpret non-standard arithmetic. He builds (partial) non-principal ultrafilters for the definable sets of a fixed level in the arithmetic hierarchy. He obtains conservations result very similar to ours. However he cannot treat ultrafilter nor obtains program extraction.

In reverse mathematics idempotent ultrafilters are considered in the context of Hindman's theorem, which can be proven using an idempotent ultrafilter (or at least a countable part of it), see Hirst \cite{jH04} and Towsner \cite{hT11}. 
We code ultrafilter over countable fields like Hirst does. However, our construction of ultrafilters is different since we are not aiming for idempotent ultrafilters.
An idempotent ultrafilter is a very special ultrafilter and it seems that even the construction of countable parts of an idempotent ultrafilter requires a system that is proof theoretically stronger than $\ls{ACA_0^\omega} + \lpf{\mu}$ and is therefore beyond our method.

\subsection*{Logical system}

We will work in fragments of Peano arithmetic in all finite types.
The set of all finite types $\T$ is defined to be the smallest set that satisfies 
\[
0\in \T, \qquad \rho,\tau\in \T \Rightarrow \tau(\rho)\in \T
.\]
The type $0$ denotes the type of natural numbers and the type $\tau(\rho)$ denotes the type of functions from $\rho$ to $\tau$. The type $0(0)$ is abbreviated by $1$ the type $0(0(0))$ by $2$. The degree of a type is defined by
\[
deg(0):=0 \qquad deg(\tau(\rho)) := \max(deg(\tau),deg(\rho)+1)
.\]
The type of a variable will sometimes be written as superscript of a term or as subscript of an equality sign.

The system \ls{RCA_0^\omega} is the extension of \ls{RCA_0} to all finite types. The systems \ls{WKL_0^\omega}, \ls{ACA_0^\omega} are defined to be $\ls{RCA_0^\omega}+\lp{WKL}$ resp.\ $\ls{RCA_0^\omega}+\ls[\Pi^0_1]{CA}$. All of these system are conservative over their second order counterpart via the embedding of sets as characteristic functions. For details see \cite{uK05b}.

Let \lp[QF]{AC^{1,0}} be the schema
\[
\Forall{f^1}\Exists{x^0}\lf{A_\qf}(f,x) \IMPL \Exists{F^2}\Forall{f^1} \lf{A_\qf}(f,F(f))
.\]
All of the above defined systems include \lp[QF]{AC^{1,0}}. This schema is the higher order equivalent to recursive comprehension (\lp[\Delta^0_1]{CA}).

The terms of \ls{RCA_0^\omega} consist of $0^0$, the successor function $S^1$, lambda combinators $\Pi$ and $\Sigma$ for all types, which provide lambda abstraction, and the recursor $R_0$.
The recursor $R_0$ satisfies the following equations
\[
R_00yz =_0 0, \qquad R_0(x+1)yz =_0 z(Rxyz)x 
.\]
It provides primitive recursion (in the sense of Kleene). The closed terms of \ls{RCA_0^\omega} are also called $T_0$ (for the restriction of G\"odel's system $T$ to recursion of type $0$).
If one adds (impredicative) recursors $R_\rho$ for all types $\rho\in \T$ to $T_0$ one obtains the full system $T$ of G\"odel. The functions in $T$ are called primitive recursive in the sense of G\"odel.
By $T_0[F]$ we will denote the system resulting from adding a function(al) $F$ to $T_0$.

The system \ls{RCA_0^\omega} has a functional interpretation (always combined with elimination of extensionality and a negative translation) in $T_0$. The system \ls{ACA_0^\omega} has a functional interpretation in $T_0[\mu]$ if one interprets comprehension using $\mu$ or in $T_0[B_{0,1}]$ if one interprets comprehension using the bar recursor of lowest type $B_{0,1}$. See \cite{uK05b} and \cite{AF98} for the interpretation using $\mu$ and \cite[Section~11]{uK08} for the interpretation using $B_{0,1}$. For a general survey on the functional interpretation see \cite{uK08} and \cite{AF98}.

\begin{definition}[non-principal ultrafilter, \lpf{\U}]\label{def:ultra}
  Let \lpf{\U} be the statement that there exists a non-principal ultrafilter (on \Nat):
  \[
  \lpf{\U}\colon\left\{
  \begin{aligned}
    \Exists{\U^2} \big(\ &\Forall{X} \left(X\in\U \OR \overline{X}\in\U\right) \\
    \AND\, &\Forall{X^1,Y^1} \left(X \cap Y \in \U \IMPL Y\in \U\right) \\
    \AND\, &\Forall{X^1,Y^1} \left(X,Y\in \U \IMPL (X\cap Y)\in\U\right)  \\
    \AND\, &\Forall{X^1} \left(X\in\U \IMPL \Forall{n}\Exists{k>n} (k\in X)\right) \\
    \AND\, &\Forall{X^1} \left(\U(X) =_0 \U(\lambda n. \min(X(n),1))\right)\big)
  \end{aligned}
  \right.
  \]
  Here $X\in \U$ is an abbreviation for $\U(X)=0$. The type $1$ variables $X,Y$ are viewed as characteristic function of sets, where $n\in X$ is defined to be $X(n) = 0$.
  The operation $\cap$ is defined as taking the pointwise maximum of the characteristic functions. With this the intersection of two sets can be expressed in a quantifier-free way.
  The last line of the definition states that $\U$ yields the same value for different characteristic functions of the same set.

  For notational ease we will usually add a Skolem constant $\U$ and denote this also with \lpf{\U}.

  The second line in the definition of \lpf{\U} is equivalent to the following axiom usually found in the axiomatization of (ultra)filters:
  \[
  \Forall{X,Y} \left( X\subseteq Y \AND X\in\U \IMPL Y\in\U\right)
  .\]
  We avoided this statement in $\lpf{\U}$ since $\subseteq$ cannot be expressed in a quantifier free way.
\end{definition}

\begin{lemma}[finite partition property]\label{lem:finitepart}
  The ultrafilter $\U$ satisfies the finite partition property over \ls{RCA_0^\omega}.

  This means that for each finite partition $(X_i)_{i<n}$ of $\Nat$ the following holds
  \[
  \ls{RCA_0^\omega} +  \lpf{\U} \vdash \ExistsUn{i<n} X_i\in \U
  .\]
\end{lemma}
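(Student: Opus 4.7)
The plan is to handle uniqueness and existence separately. Uniqueness is immediate: if $X_i$ and $X_j$ were both in $\U$ for $i \neq j$, the intersection clause of $\lpf{\U}$ would force $X_i \cap X_j \in \U$, but $X_i \cap X_j = \emptyset$ since $(X_i)_{i<n}$ is a partition, and $\emptyset \notin \U$ by the cofiniteness clause.

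For existence I would first derive two auxiliary facts from the axioms of $\lpf{\U}$. The first is $\Nat \in \U$: applying the dichotomy clause to $X = \emptyset$ places $\overline{\emptyset} = \Nat$ in $\U$, since $\emptyset \notin \U$ (its characteristic function has no large zeros, contradicting cofiniteness). The second is a union dichotomy: $X \cup Y \in \U$ if and only if $X \in \U$ or $Y \in \U$. The ``if'' direction follows from the second clause of $\lpf{\U}$ using $X \cap (X \cup Y) = X$. For ``only if'', if $X, Y \notin \U$ then $\overline{X}, \overline{Y} \in \U$ by dichotomy, so $\overline{X} \cap \overline{Y} \in \U$ by the intersection clause; the de Morgan identity $\overline{X} \cap \overline{Y} = \overline{X \cup Y}$ (reading off as the pointwise arithmetic identity $\max(1-X(n),1-Y(n)) = 1-\min(X(n),Y(n))$, after passing to $\{0,1\}$-valued representations via the extensionality clause) then makes $\emptyset = (X \cup Y) \cap \overline{X \cup Y}$ lie in $\U$, a contradiction.

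Existence now proceeds by induction on $n$. The base case $n = 1$ is just $X_0 = \Nat \in \U$. For the step from $n$ to $n+1$, given a partition $X_0, \ldots, X_n$ of $\Nat$, coarsen it to the $n$-part partition $X_0 \cup X_1, X_2, \ldots, X_n$. By the inductive hypothesis, some member of the coarsened partition lies in $\U$. If that member is $X_i$ for some $i \geq 2$, we are done; otherwise $X_0 \cup X_1 \in \U$, and the union dichotomy supplies $X_0 \in \U$ or $X_1 \in \U$.

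Since only the axioms of $\lpf{\U}$ and a number-theoretic induction on $n$ are used, formalization in $\ls{RCA_0^\omega}$ is routine once a standard coding of finite partitions as a sequence of characteristic functions is fixed. The only mildly fiddly point is tracking the de Morgan identity through the extensionality clause of $\lpf{\U}$; this is pure bookkeeping rather than a conceptual obstacle.
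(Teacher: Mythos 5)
Your argument is correct set-theoretically and close in spirit to the paper's, but one step needs repair before it formalizes in $\ls{RCA_0^\omega}$, whose induction is restricted (quantifier-free, and derived $\Sigma^0_1$, induction only). As you state it, the induction on $n$ is over the statement ``every $n$-piece partition has a member of $\U$'': the induction hypothesis is applied to the coarsened partition $X_0\cup X_1, X_2,\dots,X_n$, a different type-1 object from the given one, so the induction formula carries a universal quantifier over (sequences of) sets. Induction for such formulas is not available in $\ls{RCA_0^\omega}$, so ``a number-theoretic induction on $n$'' glosses over the only genuinely delicate point of the lemma. The repair stays entirely inside your argument: fix the given partition $(X_i)_{i<n}$ as a parameter and induct on $m$ over the statement that some piece of the $m$-th coarsening (the head union $X_0\cup\dots\cup X_{n-m}$ together with the remaining pieces $X_{n-m+1},\dots,X_{n-1}$) lies in $\U$; since membership in $\U$ is quantifier-free and the bounded quantifier over $i$ is decidable, this is an instance of \lp[QF]{IA}. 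After this reorganization your proof is essentially the mirror image of the paper's, which keeps the partition fixed and runs a quantifier-free induction on the invariant \eqref{eq:finpartind} about the tail unions $\bigcup_{j\ge m}X_j$, with uniqueness folded into the invariant rather than handled separately.

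Two smaller points of comparison. The paper avoids your union-dichotomy lemma altogether: in its induction step the head union and the tail union are complements of each other (because the $X_j$ partition $\Nat$), so the dichotomy clause of $\lpf{\U}$ applies directly, and the newly isolated piece is obtained as an intersection of two sets already in $\U$; your detour through de Morgan and the $0/1$-normalization clause is correct but unnecessary. Your separate uniqueness argument (empty intersection against the cofinality clause) and the derivation of $\Nat\in\U$ are fine as written.
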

\begin{proof}
  We prove by quantifier-free induction on $m$ the statement 
  \begin{equation}\label{eq:finpartind}
  \ExistsUn{i\le m} \bigg(\Big( i < m \IMPL X_i \in \U\Big) \AND\ \Big( i = m \IMPL \bigcup_{\mathrlap{j=m}}^{\mathrlap{n-1}}  X_j \in \U \Big)\bigg)
  .\end{equation}
  In the cases $m\le 2$ the statement follows directly from \lpf{\U}.
  For the induction step we assume that the statement for $m$ holds. This means there exists an $i$ as stated in \eqref{eq:finpartind}. If $i<m$ then this $i$ also satisfies \eqref{eq:finpartind} with $m$ replaced by $m+1$ and we are done. Otherwise we have $\bigcup_{j=m}^{n-1}  X_j\in \U$.

  The axiom \lpf{\U} yields
  \[
  \bigcup_{\mathrlap{j=0}}^{m}  X_j\in \U\ \OR\ \bigcup_{\mathrlap{j=m+1}}^{\mathrlap{n-1}} X_j\in \U
  .\]
  If the left side of the disjunction holds then
  \[
  X_m = \bigcup_{\mathrlap{j=0}}^{m}  X_j \cap \bigcup_{\mathrlap{j=m}}^{\mathrlap{n-1}}  X_j  \in \U
  \]
  and $i:=m$ satisfies the \eqref{eq:finpartind} with $m$ replaced by $m+1$. If the right side of the disjunction holds $i:=m+1$ satisfies \eqref{eq:finpartind}.

  The lemma follows from \eqref{eq:finpartind} by taking $m:= n$.
\end{proof}

\begin{theorem}\label{thm:ultraca}
  \[
  \ls{RCA_0^\omega} + \lpf{\U} \vdash \lpf{\mu}
  \]
  In particular $\ls{RCA_0^\omega} + \lpf{\U}\vdash \ls{ACA_0^\omega}$.
\end{theorem}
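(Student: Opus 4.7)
The plan is to use $\U$ to build a closed term $D^2$ that decides whether a type~$1$ function $f$ has a zero, and then to extract Feferman's $\mu$ from this decision procedure via $\lp[QF]{AC^{1,0}}$. This deliberately avoids any attempt to compute the witness directly from $\U$ by a primitive recursive search; choice is used instead to turn a non-uniform existence proof into a type~$2$ functional.

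First I would set $\chi_f(n) := \min\{f(x) : x \le n\}$, a closed term in $f$ built from $R_0$, and define $D(f) := \U(\chi_f)$. The main equivalence to establish is $D(f) = 0 \IFF \Exists{x} f(x) = 0$. The only subtle point, and the one place where a naive attempt would founder, is that one must work with the thickened set $B_f := \{n : \chi_f(n) = 0\}$ rather than with the raw zero-set of $f$: the latter may be finite (and so fail to lie in $\U$) even when $f$ has a zero, whereas $B_f$ becomes cofinite as soon as any zero appears. Concretely, if $f(x_0) = 0$ then $\{n : n \ge x_0\} \subseteq B_f$; clauses~$4$ and~$1$ of Definition~\ref{def:ultra} place the cofinite set $\{n : n\ge x_0\}$ into $\U$, and then the upward-closure clause~$2$ forces $B_f \in \U$, so $D(f) = 0$. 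Conversely, if no zero exists then $B_f = \emptyset \notin \U$ by clause~$4$, so $D(f)\neq 0$.

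With the equivalence in hand, I would verify
\[
\Forall{f^1} \Exists{m^0} \bigl( D(f) \neq 0 \OR f(m) = 0 \bigr),
\]
whose matrix is quantifier-free in the language with the Skolem constant $\U$. This is trivial by a decidable case split on $D(f)$: if $D(f) \neq 0$ take $m := 0$, and otherwise the equivalence above produces a zero of $f$. Applying $\lp[QF]{AC^{1,0}}$, which is already included in $\ls{RCA_0^\omega}$, then extracts a functional $\mu^2$ satisfying $\Forall{f}\bigl(\Exists{x} f(x) = 0 \IMPL f(\mu(f)) = 0\bigr)$, which is exactly $\lpf{\mu}$. The ``in particular'' clause $\ls{RCA_0^\omega} + \lpf{\U} \vdash \ls{ACA_0^\omega}$ then follows at once, since as noted above the theorem $\lpf{\mu}$ implies arithmetical comprehension.
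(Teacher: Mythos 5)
Your proposal is correct and is essentially the paper's own argument: the paper likewise forms the cumulative zero-set $X_f=\{x \mid \Exists{x'<x} f(x')=0\}$ (your $B_f$), observes that $X_f\in\U$ if and only if $f$ has a zero (cofinite when a zero exists, empty otherwise), and then applies \lp[QF]{AC^{1,0}} to the quantifier-free matrix $X_f\in\U \IMPL f(x)=0$ to obtain $\mu$. Your extra detail on deriving upward closure from clauses~1, 2 and~4 just spells out what the paper leaves implicit.
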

\begin{proof}
  Let $f\colon \Nat \to \Nat$ be a function.
  The set $X_f := \{ x\in\Nat \mid \Exists{x'<x} f(x')=0 \}$ is cofinal if $\Exists{x} f(x) = 0$,  if not then the set $X_f$ is empty. Hence
  \[
  X_f\in \U \quad\text{if{f}}\quad \Exists{x} f(x) = 0
  .\]
  From this it follows that
  \[
  \Forall{f} \Exists{x} \left( X_f\in \U \IMPL f(x) = 0\right)
  .
  \]
  An application of \lp[QF]{AC^{1,0}} now yields a functional satisfying \lpf{\mu}.
\end{proof}

\begin{theorem}[Program extraction]\label{thm:progex}
  Let $\lf{A_\qf}(f,g)$ be a quantifier free formula of \ls{RCA_0^\omega}  containing only $f,g$ free. In particular $\lf{A_\qf}$ must not contain $\mu$ or $\U$.

  If 
  \[
  \ls{ACA_0^\omega} + \lpf{\mu} +  \lpf{\U} \vdash \Forall{f^1}\Exists{g^1} \lf{A_\qf}(f,g)
  \]
  then one can extract a closed term $t\in T$ such that 
  \[
  \Forall{f} \lf{A_\qf}(f,tf)
  .\]
\end{theorem}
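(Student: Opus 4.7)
The plan is to use Gödel's functional (``Dialectica'') interpretation combined with a negative translation, following the general template already recalled earlier in the paper. First I would apply negative translation to reduce the classical proof to an intuitionistic proof in $\EHAw + \lpf{\mu} + \lpf{\U}$. With the Skolem constants $\mu$ and $\U$ added, both $\lpf{\mu}$ and $\lpf{\U}$ are at worst $\Pi^0_2$ with decidable matrix, so their negative translations cause no trouble.

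Next I would apply the Dialectica interpretation. The axioms of $\ls{ACA_0^\omega}+\lpf{\mu}$ are interpreted in $T_0[\mu]$ as already discussed in the paper. For $\lpf{\U}$, clauses (1), (2), (3), and (5) are purely universal in $\U$ with quantifier-free matrix, so they are their own Dialectica translation and require no extra realizer. Only clause (4) (``every set in $\U$ is infinite'') is $\Pi^0_2$ and demands a realizer producing $k>n$ with $k\in X$ whenever $X\in\U$; this can be defined by unbounded search via $\mu$. Putting everything together yields a closed term $\tilde t \in T_0[\mu,\U]$ with $\Forall{f}\, \lf{A_\qf}(f,\tilde t(f))$ whenever $\U$ is interpreted as any non-principal ultrafilter.

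The decisive step is then to remove the constant $\U$ from $\tilde t$, landing in the constant-free system $T$. My plan is to replace each occurrence of $\U$ in $\tilde t$ by a term that plays the role of a ``partial ultrafilter,'' answering the finitely many queries that arise during the computation of $\tilde t(f)$ consistently with the ultrafilter axioms. Consistency can be enforced by a finite partition argument very much in the spirit of Lemma~\ref{lem:finitepart}: given queries $X_1,\dots,X_n$ already decided, for the next query $X_{n+1}$ one looks at the $2^{n+1}$ atoms $\bigcap_i Y_i$ with $Y_i\in\{X_i,\overline{X_i}\}$, uses $\mu$ to pick one that is infinite, and assigns signs accordingly. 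Threading this evolving finite history along the recursive structure of $\tilde t$ then yields the desired closed term $t\in T$.

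The main obstacle is this last elimination step. The partial ultrafilter must be kept consistent across the nested, potentially higher-type recursive calls to $\U$ inside $\tilde t$, and all of this must be packaged into a single closed term that works uniformly in $f$. Carrying the list of previously recorded queries as an extra argument fed along the recursors of $\tilde t$ is what forces the use of the full higher-type recursors of Gödel's $T$, rather than staying within $T_0[\mu]$, where $\lpf{\mu}$ alone would already be interpreted.
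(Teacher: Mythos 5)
Your overall strategy (Dialectica with $\U$ treated as a degree-2 object, observe that only clause (4) needs a realizer, then replace $\U$ by a ``partial ultrafilter'' answering the finitely many queried sets) is indeed the skeleton of the paper's argument, but there is a decisive gap at the end: your construction never produces a term of G\"odel's $T$. The term $\tilde t$ you extract lies in $T_0[\mu,\U]$, and every ingredient of your elimination of $\U$ (realizing clause (4), deciding which atom of the finite partition is infinite, and interpreting the axiom \lpf{\mu} itself) again uses $\mu$; threading query histories through higher-type recursors still leaves $\mu$ inside the term. Since $\mu$ is a discontinuous functional, it is not definable by any closed term of $T$, so ``$T$ plus $\mu$'' is not $T$, and no amount of recursor bookkeeping fixes this. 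This is precisely why the paper does not stay at the term level: it eliminates $\U$ \emph{inside proofs}, obtaining $\ls{RCA_0^\omega}+\lpf{\mu}\vdash\Forall{f}\Exists{g}\lf{A_\qf}(f,g)$ via Proposition~\ref{pro:ultraex}, then uses conservativity of $\ls{RCA_0^\omega}+\lpf{\mu}$ over \ls{ACA_0^\omega} \cite{AF98}, re-extracts a realizer by interpreting comprehension with the bar recursor $B_{0,1}$, and finally invokes the result that type-2 terms of $T_0[B_{0,1}]$ are equal to terms of $T$ \cite{uK99}. Without a substitute for these last steps your argument yields at best a realizer in $T_0[\mu]$ (or a conservation statement), not the theorem as stated.

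Two further points are glossed over. First, the statement ``$\Forall{f}\,\lf{A_\qf}(f,\tilde tf)$ whenever $\U$ is interpreted as any non-principal ultrafilter'' is too coarse for the substitution step: the partial object you plug in is \emph{not} an ultrafilter, so you must know that the verification uses the ultrafilter axioms only at the queried instances, and you must enumerate the queried sets even though their defining terms themselves contain $\U$. The paper secures this by keeping $\U$ and $K$ as degree-2 variables so that the interpretation yields the explicit counterexample form \eqref{eq:ultraimpnd}, and by the term-normalization Theorem~\ref{thm:cut2}, which forces every occurrence of $\U$ into the form $\U(t[j^0])$ and lets the partial ultrafilter be built by recursion along the subterm ordering; your ``answer queries as they arise'' sketch presupposes exactly this machinery without supplying it. Second, since $\U$ is a new type-2 constant, full extensionality must be eliminated before the functional interpretation can be applied, which requires proving that $\U$ is extensional from the axioms (Lemma~\ref{lem:ultraexelim}); this step is missing from your plan.
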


\noindent
The proof of this theorem proceeds in five steps:
\begin{enumerate}[label=\arabic*.]
\item Using the functional interpretation and proof theoretic methods developed in \cite{swkl} we show that a proof of the statement  
  \[
  \ls{ACA_0^\omega} + \lpf{\mu} + \lpf{\U} \vdash \Forall{f}\Exists{g} \lf{A_\qf}(f,g)
  \]
  can be normalized in such a way that each application of the functional $\U$ that occurs in the proof has the form $\U(t[n^0])$, where $t$ is a term that contains only $n$ free and with $\lambda n . t \in T_0[\U]$. (We do not have to consider $\mu$ here, since it can be defined from $\U$ by Theorem~\ref{thm:ultraca}.)
  In particular this shows the ultrafilter $\U$ is used only on countable many sets.
\item We show that we can construct in $\ls{RCA_0^\omega}+\lpf{\mu}$ a \emph{partial ultrafilter}, that is an object that behaves like an ultrafilter on the sets that occur in the proof. We then replace $\U$ by this partial ultrafilter and obtain a proof of $\Forall{f}\Exists{g} \lf{A_\qf}(f,g)$ in $\ls{RCA_0^\omega}+\lpf{\mu}$.
\item
  The theory $\ls{RCA_0^\omega}+\lpf{\mu}$ is conservative over \ls{ACA_0^\omega}, see \cite{AF98}, hence we obtain a proof in this theory.
\item Applying the functional interpretation to this statement and interpreting the comprehension using $B_{0,1}$ yields a term $t^2\in T_0[B_{0,1}]$, such that
  \[
  \Forall{f} A_\qf(f,tf)
  .\]
\item Since this term  $t$ is only of type $2$, one can use an ordinal analysis of the bar recursor to eliminated it and obtain a new term $t'\in T$, such that $t'=_2 t$ and hence that
  \[
  \Forall{f} A_\qf(f,t'f)
  .\]
\end{enumerate}

Before we prove this theorem we show how to construct a partial ultrafilter and provide some proof theoretic lemmata.

\subsection*{Partial ultrafilter}

\begin{definition}[partial ultrafilter]\mbox{}
  \begin{itemize}
  \item Call a set $\mathcal{A}\subseteq\ps\Nat$ of subsets of natural numbers, that is closed under complement, finite unions and finite intersections, an \emph{algebra}.
  \item Let $\mathcal{A}$ be an algebra. Call a set $\F\subseteq \mathcal{A}$ a \emph{partial non-principal ultrafilter} for $\mathcal{A}$ if{f} $\F$ satisfies the non-principal ultrafilter axioms in Definition~\ref{def:ultra} relativized to $\mathcal{A}$, i.e.
    \[
    \left\{
    \begin{aligned}
       &\Forall{X\in \mathcal{A}} \left(X\in\F \OR \overline{X}\in\F\right) \\
      \AND\, &\Forall{X,Y\in \mathcal{A}} \left(X \cap Y \in \F \IMPL Y\in \F\right) \\
      \AND\, &\Forall{X,Y\in \mathcal{A}} \left(X,Y\in \F \IMPL (X\cap Y)\in\F\right)  \\
      \AND\, &\Forall{X\in\mathcal{A}} \left(X\in\F \IMPL \Forall{n}\Exists{k>n} k\in X\right) \\
      \AND\, &\Forall{X^1} \left(\F(X) =_0 \F(\lambda n . \min(X(n),1))\right).
    \end{aligned}
    \right.
    \]   
  \end{itemize}
\end{definition}
It is easy to see that one can extend in \ls{RCA_0^\omega} every sequence of sets to a countable algebra.
One should also note that partial non-principal ultrafilters for countable algebras are also countable.
A partial ultrafilter $\F$ can be viewed as the closed subset $\{\U\in \beta\Nat \mid \U \supseteq \F\}$ of the Stone-\v{C}ech compactification $\beta \Nat$.

\begin{proposition}\label{pro:ultraex}
  Let $\mathcal{A}$ be a countable algebra and let $\F=(F_i)_{i\in\Nat}$ be a countable partial non-principal ultrafilter for $\mathcal{A}$.
  Then $\ls{RCA_0^\omega}+\lpf{\mu}$ proves that for each countable extension $\mathcal{\tilde{A}}= (\tilde{A}_i)_{i\in \Nat}\supseteq \mathcal{A}$ there exists a partial non-principal ultrafilter $\tilde{\F}\supseteq \F$.
\end{proposition}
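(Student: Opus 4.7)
The plan is to build $\tilde{\F}$ by a one-at-a-time extension along the enumeration of $\tilde{\mathcal{A}}$, deciding at each stage whether to place $\tilde{A}_n$ or its complement in $\tilde{\F}$ so as to preserve a finite intersection property relative to $\F$. The relevant decisions have $\Pi^0_2$ complexity and so are available from $\lpf{\mu}$ (which yields arithmetical comprehension), making the entire recursion executable in $\ls{RCA_0^\omega}+\lpf{\mu}$.

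Concretely, I would fix enumerations $(\tilde{A}_n)_{n\in\Nat}$ of $\tilde{\mathcal{A}}$ and $(F_i)_{i\in\Nat}$ of $\F$, and define by simultaneous primitive recursion a sequence of ``running intersections'' $D_n\in\tilde{\mathcal{A}}$ together with the values $\tilde{\F}(\tilde{A}_n)$. Put $D_0:=\Nat$, and given $D_n$ set $\tilde{\F}(\tilde{A}_n):=0$ exactly when
\[
\Forall{i}\Forall{m}\Exists{k>m}\,\bigl(k\in D_n\cap\tilde{A}_n\cap F_i\bigr)
\]
holds and $\tilde{\F}(\tilde{A}_n):=1$ otherwise, with $D_{n+1}:=D_n\cap\tilde{A}_n$ in the first case and $D_{n+1}:=D_n\cap\overline{\tilde{A}_n}$ in the second; the inclusion $D_n\in\tilde{\mathcal{A}}$ uses only that $\tilde{\mathcal{A}}$ is an algebra. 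To upgrade the data $(\tilde{\F}(\tilde{A}_n))_n$ to a functional $\tilde{\F}^2$ satisfying the last axiom of Definition~\ref{def:ultra}, I would evaluate $\tilde{\F}(X)$ by first normalizing to $X':=\lambda k.\min(X(k),1)$ and then using $\mu$ to locate the least $n$ with $\tilde{A}_n=_1 X'$, defaulting to $1$ when no such $n$ exists.

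The verification of all axioms rests on the invariant $(\ast)$: $D_n\cap F_i$ is infinite for every $n$ and $i$. The base case is the non-principality of $\F$. For the inductive step in Case~$2$, suppose for contradiction that some $F_\ell$ makes $D_{n+1}\cap F_\ell=D_n\cap\overline{\tilde{A}_n}\cap F_\ell$ finite; together with the $F_i$ witnessing the failure of Case~$1$ one has
\[
D_n\cap(F_i\cap F_\ell)=\bigl(D_n\cap\tilde{A}_n\cap F_i\cap F_\ell\bigr)\cup\bigl(D_n\cap\overline{\tilde{A}_n}\cap F_i\cap F_\ell\bigr)
\]
finite, contradicting $(\ast)$ at stage $n$ because $F_i\cap F_\ell\in\F$ by closure of $\F$ under finite intersection on $\mathcal{A}$. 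From $(\ast)$ all partial ultrafilter axioms of Definition~\ref{def:ultra} relativized to $\tilde{\mathcal{A}}$ follow: the dichotomy is the case split; whenever $\tilde{A}_m\in\tilde{\F}$ one has $D_k\subseteq\tilde{A}_m$ for every $k>m$, which delivers both upward closure and closure under finite intersection by a stage-comparison argument; non-principality drops out of $(\ast)$ since each $F_i$ is itself infinite; and $\tilde{\F}\supseteq\F$ follows because for $\tilde{A}_n\in\F$ one has $\tilde{A}_n\cap F_i\in\F$, whence $D_n\cap\tilde{A}_n\cap F_i=D_n\cap(\tilde{A}_n\cap F_i)$ is infinite by $(\ast)$ and Case~$1$ is selected.

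The only real obstacle is the formal check that the simultaneous primitive recursion defining $D_n$ and the finite restriction of $\tilde{\F}$ to $\{\tilde{A}_0,\ldots,\tilde{A}_{n-1}\}$ is available in $\ls{RCA_0^\omega}+\lpf{\mu}$; the right-hand side at each step is arithmetical in the preceding data, which is exactly what $\lpf{\mu}$ supplies, so the recursor $R_0$ together with $\mu$ delivers the required type-$2$ functional $\tilde{\F}$.
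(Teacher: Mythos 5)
Your construction is correct, but it takes a genuinely different route from the paper. The paper works globally: for each sign vector $x\in 2^n$ it forms the cells $\tilde{A}^x$ (intersections of the $\tilde{A}_i$ and their complements), defines the $\Pi^0_2$ tree of those $x$ for which every $\tilde{A}^x\cap F_j$ is infinite, proves this tree infinite via the partition property together with \lp[\Pi^0_1]{CP}, and then extracts an infinite branch $b$ by \lp[\Pi^0_2]{WKL} (available from $\mu$), setting $\tilde{\F}:=\F\cup\{\tilde{A}_i \mid b(i)=0\}$. You instead construct that branch greedily, by an arithmetically decidable recursion whose invariant is exactly the finite intersection property $D_n\cap F_i$ infinite; your Case-2 argument (splitting $D_n\cap F_i\cap F_\ell$ along $\tilde{A}_n$ and using closure of $\F$ under intersection) is the same combinatorial kernel as the paper's proof that the tree is infinite, so the two proofs are close in spirit. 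Your version is more elementary given $\mu$: no tree, no collection, no appeal to \lp[\Pi^0_2]{WKL}, just a $T_0[\mu]$-definable course-of-values recursion plus induction on a formula rendered quantifier-free by $\mu$ — and you correctly handle the type-$2$ upgrade and the last axiom of Definition~\ref{def:ultra} by normalizing and using $\mu$ for set equality, just as the paper does. Two small points: the dichotomy axiom is not literally ``the case split,'' since $\overline{\tilde{A}_n}$ is decided at its own least index; it needs the same stage-comparison argument via the invariant that you use for upward closure and intersections (it does go through, since Case 2 at stage $n$ forces $D_{n+1}\subseteq\overline{\tilde{A}_n}$ and hence Case 1 at the index of the complement). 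Second, what the paper's tree formulation buys is the appendix: there the branch construction is redone in a weak base theory from \lp[\Pi^0_1]{WKL} plus a single instance of \lp[\Pi^0_1]{CA}, a refinement that your stage-by-stage $\Pi^0_2$ decisions would not directly support, since they genuinely use $\mu$ at every step rather than isolating the non-computability into one comprehension instance plus WKL.
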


\begin{proof}
  In the following let $x$ be the code for a tuple $\langle x_0,\dots, x_{\lth(x) - 1}\rangle$ in $2^{<\Nat}$.
  Let
  \[
  \tilde{A}^x := \bigcap_{i<\lth x}
  \begin{cases}
    \tilde{A}_i & \text{if $x_i = 0$,} \\
    \overline{\tilde{A}_i} & \text{if $x_i = 1$.}
  \end{cases}
  \]
  Using quantifier free induction one easily sees that for every $n$ the set $\left\{ \tilde{A}^x \sizeMid x \in 2^n \right\}$ defines a partition of $\Nat$, i.e.
  \begin{equation}\label{eq:part}
    \Forall{n}\ExistsUn{x \in 2^n} \left(z \in \tilde{A}^{x}\right) \quad \text{for all $z$}
  .\end{equation}

  Define a $\Pi^0_2$\nobreakdash-0/1-tree $T$ by
  \[
  T(x) \quad \text{if{f}} \quad \Forall{j} \left(\text{$\tilde{A}^x \cap F_j$ is infinite}\right)
  .\]
  The tree $T$ is infinite because otherwise we would have
  \[
  \Exists{n} \Forall{x\in 2^n} \Exists{j}\Exists{y} \Forall{z>y} \ z\notin  \tilde{A}^x\cap F_j 
  .\]
  The bounded collection principle \lp[\Pi^0_1]{CP} yields
  \begin{equation}\label{eq:treefin}
  \Exists{n} \Exists{j^*,y^*} \Forall{x\in 2^n} \Forall{z>y^*} z\notin  \tilde{A}^x\cap \bigcap_{\mathrlap{j\le j^*}} F_j
  .\end{equation}
  The set $\bigcap_{j\le j^*} F_j$ is in $\F$ and is therefore infinite. In particular it contains an element $z$ which is bigger than $y^*$. Because $\tilde{A}^x$ with $x\in 2^n$ defines a partition of $\Nat$ there is an $x$ such that $z\in \tilde{A}^x$. This contradicts \eqref{eq:treefin} and therefore the tree $T$ is infinite.

  Hence we obtain using \lp[\Pi^0_2]{WKL} (which is provable in \ls{ACA_0^\omega} and hence using $\mu$) an infinite branch $b$ of $T$.
  The set 
  \[
  \mathcal{\tilde{F}} := \F \cup \left\{ \tilde{A}_i \sizeMid b(i) =0 \right\}
  \]
  defines then a partial non-principal ultrafilter for $\mathcal{\tilde{A}}$.
  The characteristic function of $\mathcal{\tilde{F}}$ is given by
  \[
  \chi_{\mathcal{\tilde{F}}}(B) :=
  \begin{cases}
    0 & \text{if\, $\left( B\in \F\right) \OR \Exists{i} \left(b(i) =_0 0 \AND A_i=B\right)$,} \\
    1 & \text{otherwise.}
  \end{cases}
  \]
  The set equality ($A_i=B$) can be defined using $\mu$, therefore $\mathcal{\tilde{F}}$ is definable.
\end{proof}

\subsection*{Proof theory}

The system \ls{RCA_0^\omega} contains full extensionality. This means roughly that for 
a functional $\Phi$ and functions $f,g$ one has $\Phi(f)=_0 \Phi(g)$ if $f$ and $g$ are extensionally equal (i.e.\ $\Forall{x} f(x) =_0 g(x)$). Extensionality cannot be expressed in a purely universal statement and therefore contains some constructive content.
For this reason the functional interpretation cannot handle this general form of extensionality directly and it has to be eliminated beforehand.
The system \ls{RCA_0^\omega} is formulated in a way that this can be done using standard methods, i.e.\ the elimination of extensionality, see for instance \cite[Section~10.4]{uK08}. Since we added a new higher order constant $\U$ we have to check manually that this constant is extensional.
This will be done in the following lemma.
To formulate it we will need a \emph{weakly extensional system}, i.e.\ a system in which extensionality is restricted to a rule of extensionality that only allows quantifier free premises. We will use $\WEPAw + \lp[QF]{AC^{1,0}}$. This system is the weakly extensional counterpart to \ls{RCA_0^\omega} in the sense that \ls{RCA_0^\omega} results from $\WEPAw + \lp[QF]{AC^{1,0}}$ by adding the extensionality axioms. (In other words $\ls{RCA_0^\omega} \equiv \EPAw + \lp[QF]{AC^{1,0}}$.)

\begin{lemma}[Elimination of extensionality]\label{lem:ultraexelim}
  The system $\WEPAw + \lpf{\U}$ proves that $\U$ is extensional, i.e.
  \[
  \Forall{X,Y} \big(\Forall{k}\left(k\in X \IFF k\in Y\right) \IMPL \left(X\in \U \IFF Y\in \U\right)\big)
  .\]
  
  In particular, the elimination of extensionality is applicable to $\ls{RCA_0^\omega}+\lpf{\U}$. This means the following rule holds:
  If $\lf{A}$ is a statement that contains only quantification over variables of degree $\le 1$ and 
  \[
  \ls{RCA_0^\omega} \vdash \lpf{\U} \IMPL \lf{A}
  \]
  then
  \[
  \WEPAw + \lp[QF]{AC^{1,0}} \vdash \lpf{\U} \IMPL \lf{A}
  .\]
\end{lemma}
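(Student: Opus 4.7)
The plan is to first establish that $\U$ is extensional — the first displayed statement of the lemma — using only the filter axioms of \lpf{\U}, and then to invoke the standard elimination-of-extensionality procedure to obtain the rule.

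For the extensionality of $\U$, I would fix $X^1, Y^1$ satisfying $\Forall{k}(k\in X \IFF k\in Y)$, i.e.\ $\Forall{k}(X(k)=0 \IFF Y(k)=0)$, and reduce by symmetry to the direction $X\in\U \IMPL Y\in\U$. Assume $X\in\U$. The dichotomy conjunct of \lpf{\U} applied to $Y$ gives $Y\in\U \OR \overline{Y}\in\U$, so it suffices to rule out the second disjunct. If $\overline{Y}\in\U$, then by the intersection-closure conjunct $X\cap\overline{Y}\in\U$, and by the non-principality conjunct this set must be cofinal. But the hypothesis, combined with the pointwise definitions of $\cap$ as $\max$ and of complement (so that $\overline{Y}(n)=0 \IFF Y(n)\neq 0$), forces $(X\cap\overline{Y})(n)\neq 0$ for every $n$: when $X(n)=0$ the hypothesis gives $Y(n)=0$, whence $\overline{Y}(n)\neq 0$; when $X(n)\neq 0$ the maximum is already nonzero. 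This contradicts cofinality and yields $Y\in\U$. Note that the normalization conjunct of \lpf{\U} plays no role in this argument.

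For the second half, I would invoke the general elimination-of-extensionality procedure (see e.g.\ \cite[Section~10.4]{uK08}), which transforms a proof in $\EPAw + \lp[QF]{AC^{1,0}}$ of a statement whose quantifiers range only over variables of degree $\leq 1$ into a proof in $\WEPAw + \lp[QF]{AC^{1,0}}$, provided that every higher-type constant in the signature has a proof of extensionality in the weakly extensional system. The only non-standard constant here is $\U$, and the first part of the lemma supplies the required extensionality proof in $\WEPAw + \lpf{\U}$. Applied to a proof of $\lpf{\U} \IMPL \lf{A}$ in $\ls{RCA_0^\omega}$, this gives exactly the stated rule.

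The only subtlety I expect is bookkeeping around the pointwise definitions of $\cap$ and complement in the contradiction argument. Both operations are given by fixed closed terms (pointwise $\max$ and a sign function), so the case analysis on individual values $n$ uses only first-order logic over $\Nat$ and never appeals to higher-type extensionality; with this in mind, both parts go through directly.
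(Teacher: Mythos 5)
Your proposal is correct and follows essentially the same route as the paper: the extensionality of $\U$ is derived from the dichotomy, intersection-closure and cofinality conjuncts of \lpf{\U} by showing $X\cap\overline{Y}\in\U$ would have to be cofinal yet empty (the paper phrases this as a proof by contradiction, you rule out $\overline{Y}\in\U$ directly, which is the same argument), and the rule is then obtained by the standard elimination-of-extensionality procedure of \cite[Section~10.4]{uK08} once the extensionality of the Skolem constant $\U$ is available, just as in the paper (which only adds the explicit bookkeeping that $\lpf{\U}$ is analytic after Skolemization, so the $(\cdot)_e$ relativization leaves both $\lpf{\U}$ and $\lf{A}$ unchanged).
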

\begin{proof}
  Suppose that $\U$ is not extensional. Then there exist two sets $X,Y$, such that
  \[
  \Forall{k} \left(k\in X \IFF k\in Y)\right)
  \quad\text{and}\quad X\in \U \AND Y\notin \U.
  \]
  By the axiom \lpf{\U} we obtain that $\overline{Y}\in \U$ and with this 
  \[ 
  X \cap \overline{Y} \in \U
  .\]
  By the last line of \lpf{\U} there exists an $n\in X\cap\overline{Y}$. This contradicts the assumption and we conclude that $\U$ is extensional.

  For the elimination of extensionality we use the techniques presented in Section~10.4 of \cite{uK08}. We will also use the notation introduced in this section for the rest of this proof.

  The extensionality of $\U$ translates into $\U =^e \U$. Since \lpf{\U} is (after the Skolemization) analytic and the constant $\U$ is extensional, we obtain $\lpf{\U}_e \IFF \lpf{\U}$.
  Because $\lf{A}$ does not contain quantification of degree $> 1$ we also obtain that $\lf{A}_e$ is equivalent to $\lf{A}$.
  Hence $\lpf{\U} \IMPL \lf{A}$ does not change under the $(\cdot)_e$ relativization.

  The lemma follows now from Proposition~10.45 in \cite{uK08} relativized according to \cite[Section~10.5]{uK08} to \lp{RCA_0^\omega}.
\end{proof}

The next theorem will provide the term normalization that is need for the proof of Theorem~\ref{thm:progex}.
\begin{theorem}[term-normalization for degree $2$]\label{thm:cut2}
  Let $F_1,\dots,F_n$ be constants of degree~$\le 2$.

  For every term $t^1\in T_0[F_1,\dots,F_{n}]$ there is a term $\tilde{t}\in T_0[F_0,\dots,F_{n-1}]$ with
  \[
  \WEPAw \vdash t =_1 \tilde{t}
  \]
  and such that
  every occurrence of an $F_i$ in $\tilde{t}$ is of the form
  \[
  F_i(\tilde{t}_0[y^0], \dots, \tilde{t}_{k-1}[y^0])
  .\]
  Here $k$ is the arity of $F_i$, and $\tilde{t}_j[y^0]$ are fixed terms whose only free variable is~$y^0$.
\end{theorem}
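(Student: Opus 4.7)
The plan is to reduce $t$ to a $\beta\eta$-long normal form in $T_0[F_1,\dots,F_n]$ and then, at every occurrence of an $F_i$, use a local tupling trick to expose a single free type-$0$ variable in its arguments. The first step is to normalize $t$ to its $\beta\eta$-long normal form $t^\ast$ using the reduction rules of the simply typed $\lambda$-calculus together with the defining equations of $R_0$. Strong normalization of the underlying calculus guarantees termination, and $\WEPAw$ proves $t =_1 t^\ast$ by routine equational reasoning.

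Writing $t^\ast \equiv \lambda y^0.\,s^\ast$ with $s^\ast$ of type $0$, the key structural observation is that every $\lambda$-binder appearing inside $s^\ast$ binds a variable of type~$0$. Indeed, the only sources of $\lambda$-binders in a long normal form of a type-$0$ term over $T_0[F_1,\dots,F_n]$ are (i) the $\lambda z^0$-abstractions around type-$1$ arguments of $F_j$-applications, which are type-$0$ binders since $\deg F_j \le 2$ forces these arguments to have degree $\le 1$, and (ii) the two type-$0$ bound variables of the step function of an $R_0$-subterm. Because all free variables of $s^\ast$ are of type~$0$ as well, this shows that at every $F_i$-application the variables in scope are uniformly of type~$0$.

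At each occurrence $F_i(s_1,\dots,s_k)$ in $s^\ast$, let $z_1^0,\dots,z_m^0$ denote the type-$0$ variables in scope there. Using a primitive-recursive tupling $\langle\cdot\rangle_m\colon\Nat^m\to\Nat$ with projections $\pi_1,\dots,\pi_m\in T_0$ satisfying $\pi_i(\langle a_1,\dots,a_m\rangle_m) =_0 a_i$, pick a fresh variable $u^0$ and let $\hat s_j[u]$ be the result of substituting $z_i \mapsto \pi_i(u)$ throughout $s_j$ for every $i$. Replace the subterm $F_i(s_1,\dots,s_k)$ by the $\beta$-redex $(\lambda u^0.\,F_i(\hat s_1[u],\dots,\hat s_k[u]))(\langle z_1,\dots,z_m\rangle_m)$, which is provably equal to the original in $\WEPAw$ via the projection equations. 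Carrying out this rewriting simultaneously at every $F_i$-occurrence yields the desired $\tilde t$, and every occurrence of $F_i$ in $\tilde t$ now has the form $F_i(\hat s_1[u],\dots,\hat s_k[u])$ whose arguments have $u$ (renameable to $y^0$) as sole free variable.

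The main obstacle is the structural claim behind the second step: one has to verify carefully that no higher-type bound variable can survive in the long normal form of a type-$0$ subterm under our constraints. The degree bound $\deg F_i \le 2$ is essential, since it guarantees that every argument of any $F_i$ has type of degree $\le 1$, so $\eta$-expansion only introduces type-$0$ binders. Combined with the restriction of $T_0$-recursion to type~$0$, this rules out higher-type variables ever ending up in scope at an $F_i$-position. Once this is secured, the subsequent hoisting and assembly are essentially bookkeeping.
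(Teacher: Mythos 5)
Your argument is correct and is essentially the normalization argument that the paper itself only invokes by citation (Theorem~20 in \cite{swkl}, \cite[Proposition~4.2]{uK99}, cf.\ \cite[Section~8.3]{AF98}): bring the term into a ($\lambda$-)normal form in which, since all constants have degree $\le 2$ and recursion is restricted to type $0$, every binder and hence every variable occurring in an argument of an $F_i$ is of type $0$, and then contract the finitely many type-$0$ variables in scope into a single one by primitive recursive coding with projections, all provably in \WEPAw{} via the quantifier-free extensionality rule. The only point to make precise is the ``simultaneous'' rewriting: process the $F_i$-occurrences innermost-first (i.e.\ by structural induction), so that the arguments of already-treated inner occurrences, which by then contain only their own fresh type-$0$ variable, are not disturbed when the enclosing arguments are modified.
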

\begin{proof}
  See Theorem~20 in \cite{swkl}. For a reference see also \cite[proof of proposition 4.2]{uK99}. This normalization is similar to the normalization described in Section~8.3 of \cite{AF98}.
\end{proof}

The axiom \lpf{\U} can be prenext to a statement of the form
\[
\Exists{\U^2}\Forall{X^1,Y^1}\Forall{n}\Exists{k} 
\begin{aligned}[t]
  \big(\hphantom{\AND\,}&\left(X\in\U \OR \overline{X}\in\U\right) \\
  \AND\, &\left(X \cap Y \in \U \IMPL Y\in \U\right) \\
  \AND\, &\left(X,Y\in \U \IMPL (X\cap Y)\in\U\right)  \\
  \AND\, &\left(X\in\U \IMPL \left(k>n \AND k\in X\right)\right) \\
  \AND\, &\left(\U(X) =_0 \U(\lambda n . \min(Xn,1))\right)\big)
.\end{aligned}
\]
By coding the sets $X$, $Y$ together into one set $Z$ and calling the quantifier free matrix of the above statement \lp{(\U)_\qf} we arrive at
\[
\Exists{\U^2}\Forall{Z^1}\Forall{n}\Exists{k} \lpp{(\U)_\qf}{\U,Z,n,k}
.\]
Applying \lp[QF]{AC^{1,0}} yields
\begin{equation}\label{eq:ultrand}
  \Exists{\U^2}\Exists{K^2}\Forall{Z^1}\Forall{n} \lpp{(\U)_\qf}{\U,Z,n,KnZ}
.\end{equation}
%This statement is the functional interpretation of \lpf{\U}.
Note that $\U$ and $K$ are only of degree $2$. This will be crucial for the following proof.

%In a system containing \lp[\Pi^0_1]{CA} and in which $X$ is given by a term $t[n^0]$, which will be the case in the following proof, one might always choose for $K$

For $K$ one may always choose 
\begin{equation}\label{eq:k'}
  K'(n,X) :=
  \begin{cases}
    \min\{ k \in X \mid k>n\} & \text{if exists,} \\
    0 & \text{otherwise.}
  \end{cases}
\end{equation}
The functional $K'$ is definable using $\mu$.
Therefore the real difficulty lies in finding a solution for $\U$.

We are now in the position to give a proof of Theorem~\ref{thm:progex}.

\begin{proof}[Proof of Theorem~\ref{thm:progex}]
  In the light of Theorem~\ref{thm:ultraca} it is sufficient to prove only that $\ls{RCA_0^\omega} + \lpf{\U}$ is conservative.

  Let $\lf{A_\qf}(f,g)$ be a quantifier-free statement not containing $\U$, such that
  \[
  \ls{RCA_0^\omega} + \lpf{\U} \vdash \Forall{f^1}\Exists{g^1} \lf{A_\qf}(f,g)
  .\]
  By the deduction theorem we obtain 
  \[
  \ls{RCA_0^\omega} \vdash \lpf{\U} \IMPL \Forall{f}\Exists{g} \lf{A_\qf}(f,g)
  .\]
  Using Lemma~\ref{lem:ultraexelim} we obtain
  \[
  \WEPAw + \lp[QF]{AC^{1,0}} \vdash \lpf{\U} \IMPL  \Forall{f}\Exists{g} \lf{A_\qf}(f,g)
  .\]
  Reintroducing a variable $\U$ for the ultrafilter together with \eqref{eq:ultrand} gives   
  \[
  \left(\Exists{\U^2}\Exists{K^2}\Forall{Z^1}\Forall{n} \lpp{(\U)_\qf}{\U,Z,n,KnZ}\right)
  \IMPL \Forall{f}\Exists{g} \lf{A_\qf}(f,g)
  \]
  which is equivalent to
  \[
  \Forall{f}\Forall{\U^2}\Forall{K^2}\Exists{Z^1,n}\Exists{g} \left(\lpp{(\U)_\qf}{\U,Z,n,KnZ} \IMPL \lf{A_\qf}(f,g)\right)
  .\]
  A functional interpretation yields terms $t_Z,t_n,t_g\in T_0[\U,K,f]$ such that
  \begin{equation}\label{eq:ultraimpnd}
    \WEPAw \vdash \Forall{f}\Forall{\U^2}\Forall{K^2} \left(\lpp{(\U)_\qf}{\U,t_Z,t_n,Kt_nt_Z} \IMPL \lf{A_\qf}(f,t_g)\right)
    ,
  \end{equation}
  see for instance Theorem~10.53 in \cite{uK08}.
  Now by Theorem~\ref{thm:cut2} applied to $t_Z,t_n,t_g$ we obtain normalized term $t'_Z,t'_n,t'_g$ which are provably (relative to \WEPAw) equal and such that every occurrence of $\U$ and $K$ is of the form
  \[
  \U(t[j^0]) \qquad\text{resp.}\qquad K(n^0, t[j^0])
  ,\]
  where $t$ is a term in $T_0[\U,K,f]$.

  Let $(t_i)_{i<n}$ be the list of all of these terms $t$ to which $\U$ and $K$ are applied. Assume that this list is partially sorted according to the subterm ordering, i.e.\ if $t_i$ is a subterm of $t_j$ then $i<j$.

  We now build for each $f$ a partial non-principal ultrafilter $\F$ which acts on these occurrences like a real non-principal ultrafilter. For this fix an arbitrary $f$.

  The filter $\F$ is build by iterated applications of Proposition~\ref{pro:ultraex}:\\
  To start the iteration let $\mathcal{A}_{-1}$ be the trivial algebra $\{\emptyset,\Nat\}$ and $\F_{-1}$ be the partial non-principal ultrafilter for $\mathcal{A}_{-1}$.\\
  Let $\mathcal{A}_i$ be the algebra spanned by $\mathcal{A}_{i-1}$ and the sets described by $t_i$ where $\U, K$ are replaced by $\F_{i-1}$ and $K'$ from \eqref{eq:k'}, i.e.\ ${\big(t_i[\U/ \F_{i-1},K/K'] (j)\big)}_{j\in\Nat}$. Let $\F_i$ be an extension of $\F_{i-1}$ to the new algebra $\mathcal{A}_i$ as constructed in Proposition~\ref{pro:ultraex}.

  Obviously in a term $t_i$ the functional $\F$ is only applied to subterms of $t_i$.
  Since the $(t_i)$ is sorted according to the subterm ordering the partial non-principal ultrafilter is already fixed for this applications.

  For the resulting partial non-principal ultrafilter $\F:=\F_n$ we then get
  \[
  \lpp{(\U)_\qf}{\F,t_Z[\F_n,K',f] ,t_n[\F,K',f],K't_n[\F,K',f]t_Z[\F,K',f]}
  .\]
  and in total
  \[
  \ls{RCA_0^\omega} + \lpf{\mu} \vdash \Forall{f} \Exists{\F} \lpp{(\U)_\qf}{\F,t_Z[\F,K',f] ,t_n[\F,K',f],K't_n[\F,K',f]t_Z[\F,K',f]}
  .\]
 
  Combining this with \eqref{eq:ultraimpnd} yields
  \[
  \ls{RCA_0^\omega} + \lpf{\mu} \vdash \Forall{f} \Exists{\F} \lf{A_\qf}(f,t_g[\F,K',f])
  \]
  and hence
  \[
  \ls{RCA_0^\omega} + \lpf{\mu} \vdash \Forall{f} \Exists{g} \lf{A_\qf}(f,g)
  .\]
  With this we have eliminated the use of $\lpf{\U}$ in the proof. 

  By Theorem~8.3.4 of \cite{AF98} the theory $\ls{RCA_0^\omega} + \lpf{\mu}$ is conservative over \ls{ACA_0^\omega} and therefore 
   \[
  \ls{ACA_0^\omega} \vdash \Forall{f} \Exists{g} \lf{A_\qf}(f,g)
  .\]

  To obtain a realizer for $g$ use again the functional interpretation on the last statement. This extracts a realizer $t \in T_0[B_{0,1}]$ where $B_{0,1}$ is the bar recursor of lowest type, see Section~11.3 in \cite{uK08}. Since $t_g$ is only a term of type $2$ one can find a term $t'\in T$ which is equal to $t$, see \cite[Corollary~4.4.(1)]{uK99}. This $t'$ solves the theorem.
\end{proof}

If one is not interested in the extracted program then one can obtain a stronger conservation result:

\begin{theorem}[Conservation]
  The system $\ls{ACA_0^\omega} + \lpf{\mu} + \lpf{\U}$ is $\Pi^1_2$-conservative over $\ls{ACA_0^\omega}$ and therefore also conservative over $\ls{PA}$. 
\end{theorem}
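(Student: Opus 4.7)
The plan is to adapt the proof of Theorem~\ref{thm:progex} to arbitrary $\Pi^1_2$ matrices, dropping the final step in which a realizer in $T$ is extracted. First I would reduce a $\Pi^1_2$ sentence $\Forall{f^1}\Exists{g^1}\Phi(f,g)$ with $\Phi$ arithmetical to one with quantifier-free matrix by exploiting the Skolem constant $\mu$; then I would eliminate $\lpf{\U}$ by the same partial-ultrafilter argument; and finally I would apply the conservation of $\ls{RCA_0^\omega}+\lpf{\mu}$ over $\ls{ACA_0^\omega}$.

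Concretely, suppose $\ls{ACA_0^\omega}+\lpf{\mu}+\lpf{\U}\vdash\Forall{f}\Exists{g}\Phi(f,g)$ with $\Phi$ arithmetical. Using $\mu$ I rewrite $\Phi(f,g)$ as a provably equivalent quantifier-free formula $\Phi^*(f,g)$ in the language of $T_0$ extended by $\mu$, since $\mu$ decides $\Sigma^0_1$ formulas and iteration handles higher complexity. Next I run the proof of Theorem~\ref{thm:progex} with $\Phi^*$ in place of $A_\qf$, the only difference being that $\mu$ now appears as an additional degree-$2$ constant inside the matrix. Every step adapts: Lemma~\ref{lem:ultraexelim} still eliminates extensionality since in its standard ``least witness or zero'' form $\mu$ is extensional; after Skolemizing $\lpf{\U}$ via~\eqref{eq:ultrand}, the functional interpretation yields terms $t_Z,t_n,t_g\in T_0[\U,K,\mu,f]$; Theorem~\ref{thm:cut2} applies uniformly to the three degree-$2$ constants $\U,K,\mu$, producing normalized terms in which each of them is applied only to fixed subterms of the form $t[j^0]$; and the iterated partial-ultrafilter construction based on Proposition~\ref{pro:ultraex} proceeds unchanged. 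Substituting $\F$ for $\U$ and the canonical $K'$ of~\eqref{eq:k'} for $K$ eliminates $\lpf{\U}$ and yields
\[
\ls{RCA_0^\omega}+\lpf{\mu}\vdash\Forall{f}\Exists{g}\Phi^*(f,g),
\]
which in that system is equivalent to $\Forall{f}\Exists{g}\Phi(f,g)$.

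By Theorem~8.3.4 of~\cite{AF98} this gives $\ls{ACA_0^\omega}\vdash\Forall{f}\Exists{g}\Phi(f,g)$, establishing the $\Pi^1_2$-conservation; conservation over $\ls{PA}$ is then immediate from the conservation of $\ls{ACA_0^\omega}$ over $\ls{PA}$ recorded in~\cite{uK05b}. The hard part will be to check that inserting $\mu$ into the subterm-ordered iteration of Proposition~\ref{pro:ultraex} does not disturb the construction: the finitely many subterms to which $\mu$ is applied must be interleaved with those to which $\U$ and $K$ are applied in some order respecting the subterm relation, and one must observe that evaluating a $\mu$-subterm at stage $i$ uses only the concrete $\mu$ of $\ls{RCA_0^\omega}+\lpf{\mu}$ and not the partial ultrafilter $\F_{i-1}$ that is still being built, so each enlargement step remains an admissible instance of Proposition~\ref{pro:ultraex}.
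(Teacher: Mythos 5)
Your proposal is correct and follows essentially the same route as the paper: use \lpf{\mu} to replace the arithmetical matrix by a provably equivalent quantifier-free one, rerun the elimination of \lpf{\U} from the proof of Theorem~\ref{thm:progex} with $\mu$ carried along, and finish with Theorem~8.3.4 of \cite{AF98}. The only cosmetic difference is that the paper absorbs $\mu$ into the base term system (replacing $T_0$ by $T_0[\mu]$) rather than listing it as an extra degree-$2$ constant in the normalization of Theorem~\ref{thm:cut2}, which amounts to the same observation you make, namely that $\mu$-subterms are evaluated by the genuine $\mu$ of $\ls{RCA_0^\omega}+\lpf{\mu}$ and do not interfere with the partial-ultrafilter iteration.
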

\begin{proof}
  Let $\Forall{f}\Exists{g} \lf{A}(f,g)$ be an arbitrary $\Pi^1_2$ statement which is provable in $\ls{ACA_0^\omega} + \lpf{\mu} + \lpf{\U}$ and does not contain $\mu$ or $\U$. We will show that this statement is provable in $\ls{ACA_0^\omega}$ and if it is arithmetical also in $\ls{PA}$.

  Relative to $\lpf{\mu}$ each arithmetical formula is equivalent to a quantifier free formula. Hence there exists a quantifier free formula $\lf{A_\qf'}$ such that
  \[
  \ls{RCA_0^\omega} + \lpf{\mu} \vdash \lf{A}(f,g) \IFF \lf{A_\qf'}(f,g)
  .\]
  This gives
  \[
  \ls{RCA_0^\omega} + \lpf{\mu} + \lpf{\U} \vdash \Forall{f}\Exists{g} \lf{A_\qf'}(f,g)
  .\]
  Since the system $\ls{RCA_0^\omega} + \lpf{\mu}$ has a functional interpretation in $T_0[\mu]$, see \cite[8.3.1]{AF98}, one can now apply the same argument as in the proof of Theorem~\ref{thm:progex} with $T_0$ is replaced by $T_0[\mu]$, and obtains that
  \[
  \ls{RCA_0^\omega} + \lpf{\mu} \vdash \Forall{f}\Exists{g} \lf{A_\qf'}(f,g)
  \]
  and therefore also 
  \[
  \ls{RCA_0^\omega} + \lpf{\mu} \vdash \Forall{f}\Exists{g} \lf{A}(f,g)
  .\]

  The result follows now also from Theorem~8.3.4 of \cite{AF98}.
\end{proof}

\appendix
\section{Elimination of Skolem functions for monotone formulas}
We will show in this appendix that uses of a partial non-principal ultrafilter for an algebra given by a fixed term over a weak basis theory does not lead to more than primitive recursive growth. For this we will make use of Kohlenbach's elimination of Skolem functions for monotone formulas, see \cite{uK98a}, \cite[Chapter~13]{uK08}.

Let \ls{WKL_0^*} be the system \ls{WKL} where \lp[\Sigma^0_1]{IA} is replaced by \lp[QF]{IA} and the exponential function and let \lp{{WKL_0^\omega}^*} be the corresponding finite type extension. For a formal definition of see \cite[X.4.1]{sS99} and \cite{uK05b} for the finite type system.

Let \lpp[\Pi^0_1]{CA}{f} be the restriction of $\Pi^0_1$-comprehension to the $\Pi^0_1$ formula given by $f$, i.e.\ the statement
\[
\Exists{g} \Forall{n}\left(g(n)=0 \IFF \Forall{x} f(n,x) = 0\right)
\]
Further, let \lpp{\U}{\mathcal{A}} be the principle that states that for the algebra $\mathcal{A}=(A_n)_{n\in\Nat}$ given by $(f(n))_{n\in\Nat}$ there exists a set $F\subseteq \Nat$, such that 
\[
\F = \{ A \mid \Exists{n\in F} (A=A_i)\}
\]
satisfies \lpf{\U} relativized to $\mathcal{A}$.
This means that
\[
\left\{
  \begin{aligned}
    &  \Forall{i,j} \left(A_i = \overline{A_j} \IMPL \left(i\in F \OR j\in F\right)\right) \\
    \AND\, & \Forall{i,j} \left(\left(A_i \subseteq A_j \AND i\in F \right)\IMPL j\in F \right)\\
    \AND\, & \Forall{i,j,k} \left(\left(i,j\in F \AND A_k = A_i\cap A_j \right) \IMPL k\in F\right) \\
    \AND\, & \Forall{i} \left(i\in F \IMPL \Forall{n}\Exists{k>n} (k\in A_i)\right) .
  \end{aligned}
\right.
\]

We obtain the following theorem:
\begin{theorem}
  Let $\lf{A_\qf}(f,x)$ be a quantifier free formula that contains only $f,x$ free and let $t_1,t_2$ be terms in \ls{{WKL_0^\omega}^*}.
  If 
  \[
  \ls{{WKL_0^\omega}^*} \vdash
  \Forall{f} \left(\lpp[\Pi^0_1]{CA}{t_1f} \AND \lpp{\U}{t_2f} \IMPL \Exists{x} \lf{A_\qf}(f,x)
  \right)
  \]
  then one can extract a primitive recursive (in the sense of Kleene) functional $\Phi$ such that
  \[
  \ls{RCA_0^\omega} \vdash \Forall{f} \lf{A_\qf}(f,\Phi(f))
  .\]

  In particular if $f$ is only of type $0$ one obtains that there exists a primitive recursive function $g$ such that
  \[
  \ls{PRA} \vdash \Forall{x} \lf{A_\qf}(x,g(x))
  .\]
\end{theorem}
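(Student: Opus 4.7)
The plan is to reduce to Kohlenbach's elimination of Skolem functions for monotone formulas (see \cite{uK98a} and \cite[Chapter~13]{uK08}). The decisive feature of both antecedent principles is that their Skolem content is a $0/1$-valued function — the characteristic function $g$ witnessing \lpp[\Pi^0_1]{CA}{t_1f}, and the indexing set $F$ witnessing \lpp{\U}{t_2f} — which is therefore uniformly majorizable by the closed primitive recursive term $\lambda n.\,1$. The internal search content of both principles (the witness search in the converse direction of the $\IFF$ defining the comprehension, and the cofinality clause of the partial ultrafilter) operates on sets described by a fixed term, which is what keeps their monotone interpretation within the bounded form required by the extraction.

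First I would prenex and Skolemize both hypothesis principles. \lpp[\Pi^0_1]{CA}{t_1f} takes the form $\Exists{g\le_1 1}\Forall{n,x}\,\lf{B_0}(f,g,n,x)$ with $\lf{B_0}$ quantifier-free, while \lpp{\U}{t_2f} takes the form $\Exists{F\le_1 1}\Forall{\vec u}\Exists{\vec v}\,\lf{B_1}$, with the inner $\Exists{\vec v}$-block Skolemized to absorb the cofinality witness. Pulling the existentials over $g$ and $F$ out of the antecedent of the implication, the hypothesis reads
\[
\ls{{WKL_0^\omega}^*} \vdash \Forall{f, g, F \le_1 1}\Forall{\vec u}\Exists{\vec v, x}\big(\lf{B_\qf}(f,g,F,\vec u,\vec v) \IMPL \lf{A_\qf}(f,x)\big),
\]
with $\lf{B_\qf}$ quantifier-free. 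Kohlenbach's extraction now applies with $\lambda n.\,1$ as the uniform majorant for both $g$ and $F$, and with \lp{WKL} absorbed simultaneously. The output is a closed primitive recursive (Kleene) functional $\Phi_0$ bounding $x$ in terms of $f$ alone, and bounded search using decidability of \lf{A_\qf} refines $\Phi_0$ to an exact realizer $\Phi$. Since $\Forall{f}\,\lf{A_\qf}(f,\Phi(f))$ is $\Pi^0_1$ over primitive recursive data and is true — it is validated in $\ls{ACA_0^\omega}+\lpf{\mu}+\lpf{\U}$, where both antecedent principles are theorems — it descends to \ls{RCA_0^\omega}, and to \ls{PRA} in the type-$0$ case.

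The main obstacle will be verifying that Kohlenbach's extraction theorem, standardly stated for \lp{WKL}, accommodates both additional bounded-Skolem hypotheses simultaneously and uniformly in $f$. The \lp[\Pi^0_1]{CA} case is essentially covered in the literature, but the partial ultrafilter principle is non-standard; one needs to check that its Skolemization produces a bounded-$\Sigma^0_1$ hypothesis of the format required by Kohlenbach's extraction (see \cite[Theorem~13.3]{uK08}), and that the induction on derivations in $\ls{{WKL_0^\omega}^*}$ closes under the additional universal quantifiers $\Forall{g,F \le_1 1}$ in the antecedent. Both reduce to the observation that the monotone interpretation of a bounded-antecedent implication is again of bounded form, so that no growth beyond primitive recursive is introduced.
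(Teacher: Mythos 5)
There is a genuine gap, and it sits exactly where you park it as ``the main obstacle''. Your plan treats both hypotheses as bounded-Skolem ($\exists g\le_1 1$, $\exists F\le_1 1$) statements with an essentially universal matrix, majorizes by $\lambda n.1$, and invokes the standard extraction. But neither principle has that form. In \lpp[\Pi^0_1]{CA}{t_1f} the biconditional $g(n)=0 \IFF \Forall{x}\, t_1f(n,x)=0$ has a $\Sigma^0_1$ direction, so its prenex form is $\Exists{g\le_1 1}\Forall{n,x_1}\Exists{x_2}(\dots)$ with an \emph{unbounded} inner existential; Skolemizing it introduces a witness function that is essentially $\mu$ applied to the term, which is not primitive recursively majorizable uniformly in $f$. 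The same happens for \lpp{\U}{t_2f}: the cofinality clause is $\Pi^0_2$, and the clauses with premises $A_i\subseteq A_j$, $A_i=\overline{A_j}$, $A_k=A_i\cap A_j$ have $\Pi^0_1$ premises, hence $\Sigma^0_1$ content after prenexing. After pulling these Skolem functions out of the antecedent you get universally quantified \emph{unbounded} type-$1$ parameters, and the extracted bound would depend on majorants for them; to discharge them you would have to instantiate with the true (non-primitive-recursive) Skolem functions, destroying the claimed bound. Eliminating precisely these unbounded Skolem functions is the whole content of Kohlenbach's technique (hence its name), and his Proposition~13.20 is stated for term-instances of \lp[\Pi^0_1]{CA} only; asserting that it ``accommodates'' the ultrafilter principle as a bounded hypothesis is the theorem itself, not a checkable side condition.

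What is missing is the reduction the paper performs: one formalizes the branch construction of Proposition~\ref{pro:ultraex} to produce a term $t'$ with $\Forall{h}\left(\lpp[\Pi^0_1]{CA}{t'h} \IMPL \lpp{\U}{h}\right)$, by replacing the use of \lp[\Pi^0_1]{CP} with a suitable term-instance of \lp[\Pi^0_1]{CA} plus \lp[QF]{AC^{1,0}}, and the use of \lp[\Pi^0_2]{WKL} with \lp[\Pi^0_1]{WKL} (equivalent to \lp{WKL}) plus another such instance; the two comprehension instances (and then the ones given by $t_1$ and $t't_2$) are coded into one, after which Proposition~13.20 applies verbatim. Without this reduction your argument has no mechanism for handling the ultrafilter hypothesis. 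Finally, your closing step --- that $\Forall{f}\lf{A_\qf}(f,\Phi(f))$ ``descends'' to \ls{RCA_0^\omega} because it is true and provable in $\ls{ACA_0^\omega}+\lpf{\mu}+\lpf{\U}$ --- is not valid: provability in a weak system does not follow from truth or from provability in a stronger one; in the correct argument the verifying proof in the weak system is part of what the (monotone) functional interpretation delivers.
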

\begin{proof}
  We will show, by formalizing the construction of $b$ in the proof of Proposition~\ref{pro:ultraex}, that there exists a term $t'$ such that
  \begin{equation}\label{eq:t'}\notag
  \Forall{h} \left(\lpp[\Pi^0_1]{CA}{t'h} \IMPL \lpp{\U}{h}\right)
  .
  \end{equation}

  The theorem follows then from the elimination of Skolem functions for monotone formulas
  and the fact that one can code the two instances of \lp[\Pi^0_1]{CA} given by $t_1$ and $t't_2$ into one. For the elimination of Skolem functions see  for instance Proposition~13.20 in \cite{uK08} --- the statement of this proposition is essentially the same as of this theorem without $\U$. For the conservativity over \ls{PRA}, see \cite{AF98}.

  In the construction of $b$ in the proof of Proposition~\ref{pro:ultraex} only two steps cannot be formalized in \ls{{WKL_0^\omega}^*}.
  The first step is the application of \lp[\Pi^0_1]{CP} and the second is the use of \lp[\Pi^0_2]{WKL}.
  The use of \lp[\Pi^0_1]{CP} can be reduced to a suitable instance of \lp[\Pi^0_1]{CA} (with the parameters $\F,\mathcal{\tilde{A}}$) and \lp[QF]{AC^{1,0}}. The use of \lp[\Pi^0_2]{WKL} follows from \lp[\Pi^0_1]{WKL} and another instance of \lp[\Pi^0_1]{CA} (also with the parameters $\F,\mathcal{\tilde{A}}$).
  Since \lp[\Pi^0_1]{WKL} is equivalent to \lp{WKL} and one can code the two instances of comprehension together one obtains in total that the index function $b$ can be constructed in $\lp{{WKL_0^\omega}^*} + \lpp[\Pi^0_1]{CA}{t\F\mathcal{\tilde{A}}}$ for a suitable $t$. (Note that the set $\F$ cannot be defined since it involves $\mu$.)

  Using this one can extend the partial ultrafilter $\F=\{\Nat\}$ on the trivial algebra $\mathcal{A}=\{\emptyset,\Nat\}$ to an (index set of an) ultrafilter satisfying \lpp{\U}{h}. From this one can easily construct a term $t'$. This provides the theorem.
\end{proof}

\begin{remark}
  Although the restriction of $\U$ to an algebra given by a term seems to be weak, it is strong enough to prove instances of ultralimit, i.e.\ that the ultralimit exists for (a sequence of) sequences given by one fixed term.

  To see this let $(x_n)_{n\in \Nat}$ be a sequence in the interval $[0,1]$. We will prove that the ultralimit of this sequence exists using $\lpp{(\U)}{t[(x_n)]}$ for a term $t$.
  For this let
  \[
  A_{i,k} := \left\{\, n\in\Nat \sizeMid x_n\in \left[\frac{i}{2^k}, \frac{i+1}{2^k}\right[ \,\right\}
  .\]
  Let $\mathcal{A}$ be the algebra created by this sets. It is clear that $\mathcal{A}$ can be described by a term $t[(x_n)]$.
  
  Observed that the proof of Lemma~\ref{lem:finitepart} can also be carried out in \ls{RCA_0^*}.
  Since $\big(A_{i,k}\big)_{i\le 2^k}$ defines a finite partition of $\Nat$, Lemma~\ref{lem:finitepart} provides
  \[
  \Forall{k}\ExistsUn{i\le 2^k} \left(A_{i,k}\in \U\right)
  ,\]
  (strictly speaking we obtain that the index of $A_{i,k}$ is in an index set of $\U$)
  and \lp[QF]{AC^{1,0}} yields a choice function $f(k)$ for $i$.
  Note that the ultrafilter properties provide that each $A_{f(k),k}$ is infinite and that
  \[
  \Forall{k}\Forall{k'>k} \left(A_{f(k'),k'}\subseteq A_{f(k),k}\right)
  .\]

  Let $g(k)$ be the $k$\nobreakdash-th element of $A_{f(k),k}$ then
  the sequence $\big(x_{g(k)}\big)_k$ defines a Cauchy-sequence with Cauchy-rate $2^{-k}$ which converges to $\lim_{n\to\U} x_n$. 
\end{remark}

\bibliographystyle{amsplain}
\bibliography{swkl,primrec,ultra}

\end{document}